\documentclass[11pt,a4paper]{amsart}
\setlength{\topmargin}{9.6mm} \setlength{\headheight}{0mm}
\setlength{\headsep}{0mm} \setlength{\footskip}{15mm}
\setlength{\textheight}{231mm} \setlength{\oddsidemargin}{4.6mm}
\setlength{\evensidemargin}{4.6mm} \setlength{\marginparsep}{0mm}
\setlength{\marginparwidth}{0mm} \setlength{\textwidth}{149mm}

\usepackage{amscd,amssymb,amsopn,amsmath,amsthm,graphics,amsfonts,enumerate,verbatim,calc}
\usepackage[dvips]{graphicx}
\usepackage{mathrsfs} 

\usepackage{color}

\usepackage{url}

\usepackage{amssymb,amsmath}

\textwidth=15cm \textheight=21.6cm \topmargin=0.00cm
\oddsidemargin=0.00cm \evensidemargin=0.00cm \headheight=14.4pt
\headsep=1cm \numberwithin{equation}{section}
\hyphenation{semi-stable} \emergencystretch=10pt

\newtheorem{theorem}{Theorem}[section]

\newtheorem{proposition}[theorem]{Proposition}

\newtheorem{question}[theorem]{Question}

\theoremstyle{definition}
\newtheorem{definition}[theorem]{Definition}

\theoremstyle{remark}
\newtheorem{remark}[theorem]{Remark}

\newtheorem{acknowledgement}{Acknowledgement}

\newcommand{\fm}{\frak{m}}
\newcommand{\fp}{\frak{p}}

\newcommand{\fM}{\frak{M}}

\begin{document}
\title[Matijevic-Roberts type theorems, Rees rings and associated graded rings]
{Matijevic-Roberts type theorems, Rees rings and associated graded rings}

\author[J. Horiuchi]{Jun Horiuchi}
\address{Department of Mathematics, Nippon Institute of Technology, Miyashiro, Saitama 345-8501, Japan}
\email{jhoriuchi.math@gmail.com}

\author[K. Shimomoto]{Kazuma Shimomoto}
\address{Department of Mathematics, Institute of Science Tokyo, 2-12-1 Ookayama, Meguro, Tokyo 152-8551, Japan}
\email{shimomotokazuma@gmail.com}

\thanks{2020 {\em Mathematics Subject Classification\/}: 13A02, 13A30, 13F45, 13E05}

\keywords{Associated graded ring, integral closure, Rees ring, seminormality, weak normality}


\begin{abstract}
The aim of this article is to investigate interrelated structures lying among three notable problems in commutative algebra. These are Lifting problem, Ascent/descent along associated graded rings, and Matijevic-Roberts type problem.
\end{abstract}

\maketitle 

\begin{center}
{\textit{In memory of Professor Shiro Goto}}
\end{center}

\section{Introduction}

Historically, Rees rings and associated graded rings of Noetherian rings were studied in connection with the problem of resolving singularities of Noetherian schemes by blowing up closed subschemes. The aim of the present article is to study these objects from the ring-theoretic viewpoint. One often encounters a situation where many notable ring-theoretic properties can be derived by studying associated graded rings. Geometrically, associated graded rings correspond to exceptional divisors of a blown-up scheme. Let $A$ be a Noetherian ring with an ideal $I \subseteq A$ and let $\mathscr R_+(I)$ be its Rees ring. For example, Barshay proved that if $A$ is Cohen-Macaulay and $I$ is generated by a regular sequence, then $\mathscr R_+(I)$ is also Cohen-Macaulay in \cite{Bar73}. Let $G(I)$ be the associated graded ring of $A$ with respect to $I$. Then Goto and Shimoda characterized the Cohen-Macaulayness/Gorensteinness of $\mathscr R_+(I)$ in terms of the corresponding property of $G(I)$ and its attached $a$-invariant in \cite{GS79}. Conversely, one can ask how the singularities of $G(I)$ affect the singularities of $A$. Suppose that $\mathcal{P}$ is one of the following properties: Cohen-Macaulay, Gorenstein, reduced, domain, or normal. Then under a very mild condition, if $G(I)$ has $\mathcal{P}$, then so does $A$ (see \cite[Theorem 4.5.7, Theorem 4.5.8 and Theorem 4.5.9]{BrHer93} for the proofs). We use Rees rings, extended Rees rings, and associated graded rings to study the relationship among three notable problems in commutative algebra, which are Lifting Problem, Ascent/descent Problem in associated graded rings, and Matijevic-Roberts type Problem. Our guiding principle is explained by Proposition \ref{local-lifting} and Proposition \ref{local-lifting2}. As a test example, we consider the case $\mathcal{P}=\rm{weakly~normal,~seminormal}$. See Theorem \ref{Matijevic-Roberts1} and Theorem \ref{grlocal1}.

\section{Notation}

Let us fix some notation. Let $A$ be a commutative Noetherian ring with its proper ideal $I \subseteq A$. Let $t$ be an indeterminate over $A$. For any integers $n \le 0$, we set $I^n =A$ by convention. The \textit{Rees ring of $I$} is defined as $
\mathscr R_+(I):=A[It]=\bigoplus_{n\geq 0}I^n t^n\subseteq A[t]$, \textit{extended Rees ring of $I$} as $\mathscr R(I):=A[It, t^{-1}]=\bigoplus_{n\in \mathbb{Z}}I^n t^n\subseteq A[t, t^{-1}]$, and the \textit{associated graded ring of $I$} as $G(I):=\mathscr R_+(I)/I\mathscr R_+(I)=\mathscr R(I)/t^{-1} \mathscr R(I)=\bigoplus_{n\geq 0}I^n/I^{n+1}$. For an ideal $I \subseteq A$, we say that $I$ is \textit{normal} if $I^n$ is integrally closed for all $n>0$. For a graded ring $A=\bigoplus_{n \in \mathbb{Z}}A_n$ with an ideal $I \subseteq A$, let $I^*$ denote the ideal of $A$ that is generated by all homogeneous elements contained in $I$.

As we already mentioned in the introduction, a remarkable aspect of associated graded rings is that one can often derive a certain property of $A$ from $G(I)$. For instance, it is known that if the associated graded ring with respect to some proper ideal of $A$ is Cohen-Macaulay, then $A$ is also Cohen-Macaulay. A similar statement holds for the Gorenstein property (see \cite[Proposition 1.2]{O93}). Let us recall some definitions. See a survey paper \cite{V11} for other equivalent definitions of seminormality and weak normality.

\begin{definition}
Let $A$ be a commutative reduced Noetherian ring. Consider the following conditions.
\begin{enumerate}
\item[\rm{(i)}]
For any elements $y, z\in A$ with $y^3=z^2$, there is an element $x \in A$ satisfying $y=x^2, z=x^3$.

\item[\rm{(ii)}]
For any elements $y,z,w \in A$ and any nonzero divisor $d \in A$ with $z^p=yd^p$ and $pz=dw$ for some prime integer $p$, there is an element $x \in A$ with $y=x^p$ and $w=px$.
\end{enumerate}

A ring $A$ which satisfies the condition $\rm{(i)}$ is called \textit{seminormal} (see \cite[Definition 2.17.]{V11}), and which satisfies both conditions $\rm{(i)}$ and $\rm{(ii)}$ is called \textit{weakly normal} (see \cite[Definition 3.12.]{V11}).
\end{definition}

\section{Lifting problem and associated graded rings}

$\mathbf{Set~up}$: Let $\mathbf{C}$ be the category of Noetherian rings. We consider a subcategory $\mathbf{D}$ of $\mathbf{C}$ satisfying the following condition.
\begin{enumerate}
\item[$\bullet$]
Let $A \in \mathbf{D}$. Then any finitely generated $A$-algebra belongs $\mathbf{D}$, the localization of $A$ with respect to any multiplicative subset of $A$ belongs to $\mathbf{D}$, and any subring $B \subseteq A$ such that $A$ is faithfully flat over $B$ belongs to $\mathbf{D}$. Let $I \subseteq A$ be an ideal. Then the (extended) Rees ring of $A$ with respect to $I$ belongs to $\mathbf{D}$.
\end{enumerate}

We note that if $A \to B$ is faithfully flat and $B$ is Noetherian, then $A$ is also Noetherian. Henceforth, we fix $\mathbf{D}$ as above and consider a ring-theoretic property $\mathcal{P}$ which is defined on any object of $\mathbf{D}$. We consider the following properties on the objects from $\mathbf{D}$.

\begin{enumerate}
\item[($\bf{P1}$)]
$A$ has $\mathcal{P}$ if and only if the same holds on $A_\fp$ for every prime ideal $\fp \subseteq A$.

\item[($\bf{P2}$)]
If $A$ has $\mathcal{P}$, then the polynomial algebra $A[X]$ has $\mathcal{P}$.

\item[($\bf{P3}$)]
If $A$ has $\mathcal{P}$ and $B \to A$ is a faithfully flat extension of rings, then $B$ has $\mathcal{P}$.
\end{enumerate}

In addition to the above properties, we consider the following problems.

\begin{enumerate}
\item[($\bf{Lift}$)]
Let $(A,\fm)$ be a local ring. If $A/yA$ has $\mathcal{P}$ for any nonzero divisor $y \in \fm$, then so does $A$ (called ``lifting property" for $\mathcal{P}$~).

\item[($\bf{Gr}$)]
Let $(A,\fm)$ be a local ring with an ideal $I \subseteq \fm$. If $G(I)=\bigoplus_{n \ge 0}I^n/I^{n+1}$ has $\mathcal{P}$, then so does $A$ (called ``ascent property along associated graded rings" for $\mathcal{P}$~).

\item[($\bf{MR}$)]
Let $A=\bigoplus_{n \in \mathbb{Z}}A_n$ be a graded Noetherian ring. Then $A$ has $\mathcal{P}$ if and only if the localization $A_\fm$ has $\mathcal{P}$ for every graded maximal ideal $\fm \subseteq A$ (called ``Matijevic-Roberts type theorem" for $\mathcal{P}$).
\end{enumerate}

We prove two fundamental results (Proposition \ref{local-lifting} and Proposition \ref{local-lifting2} below) which formulate how Problem $(\bf{Lift})$, Problem $(\bf{Gr})$ and Problem $(\bf{MR})$ are related to one another.

\begin{proposition}
\label{local-lifting}
Fix a subcategory $\mathbf{D} \subseteq \mathbf{C}$. Suppose that $\mathcal{P}$ satisfies $(\bf{P2})$ in $\mathbf{D}$ and that $(\bf{Gr})$ is solved for an ideal $I=yA$ for a nonzero divisor $y \in \fm$ for a local ring $(A,\fm)$ in $\mathbf{D}$. Then $(\bf{Lift})$ is solved in $\mathbf{D}$.
\end{proposition}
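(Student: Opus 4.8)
The plan is to reduce $(\mathbf{Lift})$ to $(\mathbf{Gr})$ via the elementary fact that the associated graded ring of a principal ideal generated by a nonzero divisor is a polynomial ring. The first step is therefore to record the following: \emph{if $(A,\fm)$ is local and $y\in\fm$ is a nonzero divisor, then there is an isomorphism of graded rings $G(yA)\cong (A/yA)[X]$ with $\deg X=1$.} Indeed, for each $n\ge 0$ the power $y^n$ is again a nonzero divisor, so multiplication by $y^n$ is an injective $A$-linear endomorphism of $A$ with image $y^nA$, carrying $yA$ onto $y^{n+1}A$; it thus induces an isomorphism $A/yA\xrightarrow{\ \sim\ }y^nA/y^{n+1}A$. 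Summing over $n$ shows $G(yA)=\bigoplus_{n\ge 0}y^nA/y^{n+1}A$ is a free $A/yA$-module on the powers $\bar y^n$ of the class $\bar y$ of $y$ in degree one, and $\bar y^n\ne 0$ for all $n$: an equality $y^n\in y^{n+1}A$ would give $y^n(1-ya)=0$ for some $a$, whence $ya=1$ since $y^n$ is a nonzero divisor, contradicting $y\in\fm$. Hence the $A/yA$-subalgebra generated by $\bar y$ exhausts $G(yA)$ and $\bar y$ is transcendental over $A/yA$, which is the asserted isomorphism.

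Granting this, the argument concludes quickly. Let $(A,\fm)$ be a local ring in $\mathbf D$ satisfying the hypothesis of $(\mathbf{Lift})$ and fix a nonzero divisor $y\in\fm$; by that hypothesis $A/yA$ has $\mathcal P$. Since $\mathbf D$ is closed under passage to finitely generated algebras, $A/yA$ and $(A/yA)[X]$ both lie in $\mathbf D$, so $(\mathbf{P2})$ applies and $(A/yA)[X]$ has $\mathcal P$; by the isomorphism above, $G(yA)$ has $\mathcal P$. Now invoke the hypothesis that $(\mathbf{Gr})$ is solved for the ideal $I=yA$, which is precisely the principal ideal of a nonzero divisor lying in $\fm$, to deduce that $A$ has $\mathcal P$. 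Since $A$ was an arbitrary local ring in $\mathbf D$, $(\mathbf{Lift})$ is solved in $\mathbf D$.

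I do not anticipate a real obstacle: the substance is the standard identification $G(yA)\cong(A/yA)[X]$, and the rest is formal. The points deserving a word of care are (i) that each ring entering the proof — $A/yA$, $(A/yA)[X]$, $G(yA)$ — belongs to $\mathbf D$, so that $\mathcal P$ is defined on it, which is immediate from the closure assumptions on $\mathbf D$; and (ii) the degenerate case in which $\fm$ contains no nonzero divisor (equivalently $\depth A=0$), where the hypothesis of $(\mathbf{Lift})$ is vacuous and one must either restrict attention to local rings of positive depth or separately verify that depth-zero local rings in $\mathbf D$ satisfy $\mathcal P$; I would flag this explicitly.
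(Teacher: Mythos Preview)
Your proof is correct and follows essentially the same route as the paper: identify $G(yA)\cong (A/yA)[X]$, apply $(\mathbf{P2})$, then invoke $(\mathbf{Gr})$. The only differences are cosmetic---you prove the graded isomorphism by hand whereas the paper cites \cite[Theorem 16.2]{M86}, and you explicitly verify membership in $\mathbf{D}$ and flag the depth-zero edge case, neither of which the paper comments on.
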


\begin{proof}
Let $(A,\fm)$ be a local ring in $\mathbf{D}$ and let $y\in \fm$ be a nonzero divisor such that $A/yA$ has $\mathcal{P}$. Then we have a graded ring map:
$$
\phi:(A/yA)[X] \to G(yA)=\bigoplus_{n \ge 0} y^nA/y^{n+1}A
$$
by letting $X \mapsto \overline{y} \in yA/y^2A$. Then $\phi$ is an isomorphism in view of \cite[Theorem 16.2]{M86}. Since $A/yA$ has $\mathcal{P}$ by assumption,  the condition $(\bf{P2})$ allows us to say that $(A/yA)[X]$ has $\mathcal{P}$. By the graded isomorphism $\phi$, we see that the associated graded ring $G(yA)$ has $\mathcal{P}$. By virtue of assumption $(\bf{Gr})$, it follows that $A$ has $\mathcal{P}$. In other words, $(\bf{Lift})$ is solved.
\end{proof}

We recall the following result (see \cite[Proposition I.7.4]{Sey72}).

\begin{theorem}[Seydi]
\label{NormalRing}
Let $A$ be a Noetherian ring with an element $y \in A$. Assume that $y$ is a nonzero divisor contained in the Jacobson radical of $A$. If $A/yA$ is an integrally closed domain, then so is $A$.
\end{theorem}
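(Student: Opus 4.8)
The plan is to reduce the statement to the principle, recalled in the Introduction, that a ``nice'' associated graded ring forces the base ring to be ``nice'', applied to the principal ideal $I = yA$. As a preliminary step I would record that $\bigcap_{n \ge 1} y^n A = 0$; this holds by Krull's intersection theorem because $y$ lies in the Jacobson radical of the Noetherian ring $A$. Using this, one sees that $A$ is a domain: if $a,b \in A$ are nonzero, write $a = y^i a'$ and $b = y^j b'$ with $a',b' \notin yA$ (possible since $\bigcap_n y^n A = 0$); if $ab = 0$ then, the powers of $y$ being nonzerodivisors, $a'b' = 0$, so $\overline{a'}\,\overline{b'} = 0$ in $A/yA$ with $\overline{a'},\overline{b'} \ne 0$, contradicting that $A/yA$ is a domain.

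Next I would identify the associated graded ring. Since $y$ is a nonzerodivisor, multiplication by $y^n$ gives isomorphisms $A/yA \overset{\sim}{\longrightarrow} y^n A/y^{n+1}A$, and these assemble into a graded ring isomorphism
$$
(A/yA)[X] \overset{\sim}{\longrightarrow} G(yA) = \bigoplus_{n \ge 0} y^n A/y^{n+1}A, \qquad X \longmapsto \overline{y} \in yA/y^2A,
$$
namely the isomorphism $\phi$ already used in the proof of Proposition \ref{local-lifting} (valid here because $y$ is a nonzerodivisor; compare \cite[Theorem 16.2]{M86}). Since $A/yA$ is an integrally closed domain and this property passes to polynomial extensions, $G(yA) \cong (A/yA)[X]$ is an integrally closed domain; in particular it is a normal domain. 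Now I would invoke the ascent of ``domain'' and ``normal'' along associated graded rings recalled in the Introduction, i.e.\ \cite[Theorem 4.5.7, Theorem 4.5.8 and Theorem 4.5.9]{BrHer93}, applied to $I = yA$: the only hypothesis to verify is the mild condition attached to that result, which is immediate here since $\bigcap_{n \ge 1} y^n A = 0$. It follows that $A$ is normal, so $A_\fp$ is an integrally closed domain for every prime $\fp$; combined with the first paragraph (each $A_\fm$ being an integrally closed domain with fraction field $\Frac(A)$), $A = \bigcap_{\fm} A_\fm$ is integrally closed in $\Frac(A)$, as required.

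The genuine difficulty is packaged inside the cited ascent theorem. Attacking the normality of $A$ directly through Serre's criterion $(R_1)+(S_2)$, one checks the $(S_2)$ condition and the regularity condition $(R_1)$ at primes containing $y$ without trouble, since both are controlled by the normality of $A/yA$ via the nonzerodivisor $y$; but $(R_1)$ at the height-one primes that do not contain $y$ is subtle in the absence of a catenarity assumption, and it is precisely this point that the extended-Rees-ring argument behind \cite{BrHer93} takes care of.
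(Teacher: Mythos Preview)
Your proof is correct and follows the same route the paper indicates: the paper does not prove Theorem \ref{NormalRing} in full but simply remarks that Proposition \ref{local-lifting}, combined with the ascent results \cite[Theorems 4.5.7--4.5.9]{BrHer93} for $\mathcal{P}=$ (normal) domain, yields it, and this is precisely the argument you have spelled out (identify $G(yA)\cong(A/yA)[X]$ via the map $\phi$, pass the integrally-closed-domain property to the polynomial ring, then ascend along the associated graded ring using $\bigcap_n y^nA=0$). Your separate verification that $A$ is a domain is already subsumed by the cited ascent theorems, so it is redundant but harmless.
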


An application of Proposition \ref{local-lifting} is a new proof of Theorem \ref{NormalRing}, which was already mentioned in \cite[Remark 3.8]{EHS23}. However, it seems that most existing research articles study $(\bf{Lift})$ extensively, while $(\bf{Gr})$ is less tractable. As a side topic, we prove an ideal-theoretic analogue of Theorem \ref{NormalRing} under some conditions.

\begin{proposition}
\label{NormalIdeal}
Let $(A,\fm,k)$ be a Noetherian local ring with a nonzero divisor $y \in \fm$, and let $I \subseteq \fm$ be an ideal. Assume that $y$ is a nonzero divisor on $A/I^n$ for all $n>0$, the image of $I$ in $A/yA$ is a normal ideal, and $A/yA$ is an integrally closed domain. Then $I$ is a normal ideal in $A$.  
\end{proposition}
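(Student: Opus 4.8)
The plan is to reduce the normality of $I$ to two applications of Seydi's Theorem (Theorem~\ref{NormalRing}): one to $A$ itself, and one to a suitable localization of the Rees ring $R:=\mathscr R_+(I)=\bigoplus_{n\ge 0}I^nt^n$. First, since $(A,\fm)$ is local, $y$ lies in the Jacobson radical of $A$; as $y$ is a nonzero divisor and $A/yA$ is an integrally closed domain, Theorem~\ref{NormalRing} gives that $A$ is an integrally closed domain. In particular $R\subseteq A[t]$ is a Noetherian domain and $y\in A\subseteq R$ is a nonzero divisor on $R$, and (excluding the trivial case $I=0$) $\Frac(R)=\Frac(A)(t)$. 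It then suffices to show that $\widetilde{I^n}=I^n$ for every $n>0$, where $\widetilde{I^n}$ denotes the integral closure of the ideal $I^n$ in $A$, and I would detect this equality inside $R$.

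The second step is to compute the closed fibre. The hypothesis that $y$ is a nonzero divisor on $A/I^n$ for all $n$ is exactly the assertion that $I^n\cap yA=yI^n$ for all $n$; hence the graded surjection $R\twoheadrightarrow\mathscr R_+(\overline I)$ induced by $A\to\overline A:=A/yA$ (with $\overline I$ the image of $I$) has kernel precisely $yR$, so $R/yR\cong\mathscr R_+(\overline I)$. Since $\overline A$ is an integrally closed domain and $\overline I$ is a normal ideal, $\mathscr R_+(\overline I)$ is an integrally closed domain: by the standard description of the integral closure of a Rees algebra, the integral closure of $\mathscr R_+(\overline I)$ inside $\overline A[t]$ is the graded ring assembled from the integral closures of the powers of $\overline I$, which collapses to $\mathscr R_+(\overline I)$ when $\overline I$ is normal, while $\overline A[t]$ is itself normal. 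Now $R$ is $\mathbb N$-graded with $R_0=A$ local, so it has a unique graded maximal ideal $\fM=\fm\oplus\bigoplus_{n\ge 1}I^nt^n$, and $y\in\fM$. Passing to $R_{\fM}$, which is Noetherian local with $y$ in its Jacobson radical and a nonzero divisor, the fibre $R_{\fM}/yR_{\fM}\cong(R/yR)_{\overline\fM}=\mathscr R_+(\overline I)_{\overline\fM}$ is a localization of an integrally closed domain, hence an integrally closed domain; so Theorem~\ref{NormalRing} applies again to yield that $R_{\fM}$ is an integrally closed domain.

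To finish, I would fix $n>0$ and $z\in\widetilde{I^n}$, say $z^k+a_1z^{k-1}+\cdots+a_k=0$ with $a_i\in I^{ni}$. Then $(zt^n)^k+(a_1t^n)(zt^n)^{k-1}+\cdots+a_kt^{kn}=0$ with each $a_it^{in}\in R$, so $zt^n$ is integral over $R$, a fortiori over $R_{\fM}$, and $zt^n\in\Frac(R)=\Frac(R_{\fM})$. Since $R_{\fM}$ is integrally closed, $zt^n\in R_{\fM}$, say $zt^n=g/s$ with $g\in R$ and $s=\sum_j s_jt^j\in R\setminus\fM$; here $s_0\in A\setminus\fm$ is a unit, and comparing the degree-$n$ homogeneous components of $s\cdot zt^n=g\in R$ gives $s_0z\in I^n$, whence $z\in I^n$. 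Thus $\widetilde{I^n}=I^n$ for all $n>0$, i.e.\ $I$ is a normal ideal.

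The step requiring the most care is the identification $R/yR\cong\mathscr R_+(\overline I)$: this is precisely where the hypothesis ``$y$ is a nonzero divisor on $A/I^n$ for all $n$'' is consumed, and without it the kernel of $R\to\mathscr R_+(\overline I)$ is strictly larger than $yR$ in general. A second point worth stressing is the decision to localize $R$ at its unique graded maximal ideal $\fM$ rather than to prove normality of $R$ globally; the latter would run through a Matijevic--Roberts type theorem for normality, whereas normality of $R_{\fM}$ alone already drives the final homogeneous-component argument, keeping the Rees-ring reduction self-contained modulo Theorem~\ref{NormalRing}. (Alternatively, one could invoke the Matijevic--Roberts theorem for normality to pass from $R_{\fM}$ to $R$ and then use $\mathscr R_+(I)$ normal $\Rightarrow$ $I$ normal directly.)
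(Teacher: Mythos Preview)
Your proof is correct and follows essentially the same strategy as the paper: identify $R/yR$ with the Rees ring of $\overline I$ via the hypothesis $I^n\cap yA=yI^n$, conclude that $R/yR$ is an integrally closed domain, and then apply Seydi's theorem (Theorem~\ref{NormalRing}) to the localization $R_{\fM}$ at the unique graded maximal ideal.

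The only substantive deviation is the endgame. The paper passes from normality of $R_{\fM}$ to normality of $R$ by citing a Matijevic--Roberts type result for normality (\cite[Proposition~2.2]{Shi23}) and then reads off the normality of $I$ from \cite[Proposition~5.2.1]{SwHu06}. You instead bypass the Matijevic--Roberts step with a direct homogeneous-component argument: for $z\in\widetilde{I^n}$, the element $zt^n$ is integral over $R$, hence lies in the normal domain $R_{\fM}$, and clearing a denominator $s\in R\setminus\fM$ with unit degree-zero part forces $z\in I^n$. This is a bit more elementary and self-contained; the paper's route is shorter on the page but imports an extra black box. You even flag the paper's alternative yourself in the final parenthetical, so you clearly see both paths. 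One small note: your preliminary application of Seydi to $A$ is needed in your version (to ensure $R$ is a domain so that $R_{\fM}\subseteq\Frac(R)$ and the degree comparison is meaningful), whereas the paper does not need to isolate this step.
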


\begin{proof}
Let $\mathscr R_+(I)$ be the Rees ring with respect to $I$. Then using the presentation $\mathscr R_+(I)=\bigoplus_{n\geq 0} I^n$, we get
$\mathscr R_+(I)/y\mathscr R_+(I) \cong \bigoplus_{n\geq 0} I^n/yI^n$.

Fix $n>0$. Now the image of $I^n$ in $A/yA$ is equal to $I^n/(I^n \cap yA)$. We prove that $I^n/yI^n=I^n/(I^n \cap yA)$. It is clear that $yI^n \subseteq I^n \cap yA$ and so we prove that the reverse. Let $a \in I^n \cap yA$ be any element and write $a=yb$ for some $b \in A$. Since $a \in I^n$ and $y$ is a nonzero divisor of $A/I^n$ by assumption, it follows that $b \in I^n$. Thus, $a=yb \in yI^n$. We get
$$
\mathscr R_+(I)/y\mathscr R_+(I) \cong \bigoplus_{n\geq 0} I^n/(I^n \cap yA),
$$
which is the Rees ring $(A/yA)[(I/(I \cap yA))t] \subseteq (A/yA)[t]$. By assumption, $I/(I \cap yA) \subseteq A/yA$ is a normal ideal and $A/yA$ is integrally closed, so $(A/yA)[(I/(I \cap yA))t]$ is integrally closed in a normal domain $(A/yA)[t]$ in view of \cite[Proposition 5.2.1]{SwHu06}. Hence $\mathscr R_+(I)/y\mathscr R_+(I)$ is an integrally closed domain. Let $\mathfrak{M}:=\fm \oplus \bigoplus_{n\geq 1} I^nt^n$, which is the unique graded maximal ideal of $\mathscr R_+(I)$. Since $y \in \mathfrak{M}$, it follows that $\mathscr R_+(I)_{\mathfrak{M}}$ is a local normal domain. By applying \cite[Proposition 2.2]{Shi23}, we find that $\mathscr R_+(I)$ is an integrally closed domain. In particular, this ring is integrally closed in $A[t]$, which gives that $I$ is a normal ideal by \cite[Proposition 5.2.1]{SwHu06}.
\end{proof}

\begin{question}
Does Proposition \ref{NormalIdeal} hold true without assuming that $y$ is a nonzero divisor on $A/I^n$, or $A/yA$ is integrally closed?
\end{question}

It will also be interesting to investigate the connection of Proposition \ref{NormalIdeal} with the notion of \textit{superficial elements} (see \cite{Bon05} for  
a good account).

\begin{proposition}
\label{local-lifting2}
Fix a subcategory $\mathbf{D} \subseteq \mathbf{C}$. Suppose that $\mathcal{P}$ satisfies $(\bf{P1})$ and $(\bf{P3})$ in $\mathbf{D}$ and that $(\bf{Lift})$ and $(\bf{MR})$ are solved in $\mathbf{D}$. Then $(\bf{Gr})$ is solved in $\mathbf{D}$.
\end{proposition}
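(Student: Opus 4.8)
The plan is to route the argument through the extended Rees ring, using $t^{-1}$ as a distinguished nonzero divisor. Let $(A,\fm)$ be a local ring in $\mathbf{D}$ with an ideal $I \subseteq \fm$ such that $G(I)$ has $\mathcal{P}$, and put $R := \mathscr R(I) = \bigoplus_{n \in \mathbb{Z}} I^n t^n \subseteq A[t,t^{-1}]$. By the standing hypotheses on $\mathbf{D}$, the ring $R$, each of its localizations, and the localization $R[1/t^{-1}] = A[t,t^{-1}]$ all belong to $\mathbf{D}$. The element $t^{-1}$ is a homogeneous nonzero divisor on $R$ (it is already a unit in the overring $A[t,t^{-1}]$), and $R/t^{-1}R \cong G(I)$ as recorded in Section~2.

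The first, and I expect the main, step is the structural claim that \emph{every graded maximal ideal of $R$ contains $t^{-1}$}. For this it suffices to show that $\fP + t^{-1}R$ is a proper ideal whenever $\fP \subseteq R$ is a proper graded ideal. Suppose $1 = p + t^{-1}s$ with $p \in \fP$ and $s \in R$. Comparing degree-zero components, and using that the homogeneous components of $p$ again lie in $\fP$, we get $1 = p_0 + t^{-1}s_1$ with $p_0 \in \fP \cap A$ and $s_1 \in R_1 = It$, hence $t^{-1}s_1 \in I$. Since $\fP \cap A$ is a proper ideal of the local ring $A$, both $p_0$ and $t^{-1}s_1$ lie in $\fm$, so $1 \in \fm$, a contradiction. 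Consequently any ideal maximal among the proper graded ideals of $R$ already contains $t^{-1}$.

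Next I would show $R$ has $\mathcal{P}$. By $(\bf{MR})$ it is enough to prove $R_\fM$ has $\mathcal{P}$ for each graded maximal ideal $\fM \subseteq R$. Fix such an $\fM$; by the preceding step $t^{-1} \in \fM$, so $R_\fM$ is a local ring in $\mathbf{D}$ in which $t^{-1}$ is a nonzero divisor lying in the maximal ideal. Since localization commutes with the quotient by $t^{-1}$, one has $R_\fM / t^{-1}R_\fM \cong (R/t^{-1}R)_{\fM} \cong G(I)_{\fq}$, where $\fq$ is the prime of $G(I)$ induced by $\fM$; by $(\bf{P1})$ this ring inherits $\mathcal{P}$ from $G(I)$. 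Now apply $(\bf{Lift})$ to the local ring $R_\fM$ and the nonzero divisor $t^{-1}$ — in precisely the form used in the proof of Proposition~\ref{local-lifting} — to conclude that $R_\fM$ has $\mathcal{P}$. Hence $R$ has $\mathcal{P}$ by $(\bf{MR})$.

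Finally I would descend to $A$. The localization $A[t,t^{-1}] = R[1/t^{-1}]$ has $\mathcal{P}$: its localizations at prime ideals are among the localizations of $R$, all of which have $\mathcal{P}$ by $(\bf{P1})$, so $(\bf{P1})$ applied in the other direction gives $\mathcal{P}$ for $A[t,t^{-1}]$. Moreover the inclusion $A \hookrightarrow A[t,t^{-1}] = \bigoplus_{n \in \mathbb{Z}} A t^n$ is faithfully flat, being the inclusion of $A$ into a free $A$-module of which it is a direct summand. Therefore $(\bf{P3})$ yields that $A$ has $\mathcal{P}$, which is exactly $(\bf{Gr})$. Apart from the structural claim above, the only delicate points are bookkeeping ones: confirming that each auxiliary ring genuinely lies in $\mathbf{D}$, and that $(\bf{Lift})$ is being invoked in its ``for a given nonzero divisor'' formulation, as in Proposition~\ref{local-lifting}, rather than in a stronger ``for all nonzero divisors'' form.
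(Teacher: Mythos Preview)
Your argument is correct and follows essentially the same route as the paper: pass to the extended Rees ring $\mathscr R(I)$, use $t^{-1}$ as a nonzero divisor with $\mathscr R(I)/t^{-1}\mathscr R(I)\cong G(I)$, apply $(\bf{Lift})$ and $(\bf{MR})$ to get $\mathcal{P}$ for $\mathscr R(I)$, invert $t^{-1}$ to reach $A[t,t^{-1}]$, and descend by $(\bf{P3})$. The only cosmetic difference is that the paper simply asserts that $\mathscr R(I)$ has a \emph{unique} graded maximal ideal $\fm\mathscr R(I)+It\mathscr R(I)+t^{-1}\mathscr R(I)$, whereas you prove the (sufficient) fact that every graded maximal ideal contains $t^{-1}$; your degree-zero comparison is a clean way to see this.
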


\begin{proof}
Let $(A,\fm)$ be a local ring in $\mathbf{D}$ and let $I \subseteq \fm$ be an ideal such that $G(I)=\bigoplus_{n \ge 0}I^n/I^{n+1}$ has $\mathcal{P}$. Then there is an isomorphism:
$$
G(I) \cong \mathscr R(I)/t^{-1}\mathscr R(I),
$$
where $\mathscr R(I)=A[It,t^{-1}]$ is the extended Rees ring. Note that $\mathscr R(I)$ has a unique graded maximal ideal $\mathfrak{M}=\fm \mathscr R(I)+It\mathscr R(I)+t^{-1}\mathscr R(I)$ containing $t^{-1}$. Moreover, $t^{-1}$ is a nonzero divisor of $\mathscr R(I)$. By $(\bf{Lift})$ and $(\bf{P1})$, we find that the localization $\mathscr R(I)_{\mathfrak{M}}$ has $\mathcal{P}$. Then $(\bf{MR})$ shows that $\mathscr R(I)$ has $\mathcal{P}$. By $(\bf{P1})$, the localization $\mathscr{R}(I)[t]=A[t,t^{-1}]$ has $\mathcal{P}$. Since $A[t,t^{-1}]$ is faithfully flat over $A$, the condition $(\bf{P3})$ allows us to conclude that $A$ has $\mathcal{P}$, as desired.
\end{proof}

We establish the Matijevic-Roberts type theorem for seminormality and weak normality.

\begin{theorem}
\label{Matijevic-Roberts1}
Let $A=\bigoplus_{n \in \mathbb{Z}}A_n$ be a graded Noetherian ring such that the integral closure of $A$ in its total ring of fractions is $\mathbb{Z}$-graded. 
\begin{enumerate}
\item
$A$ is seminormal if and only if the localization $A_{\fm}$ is seminormal for every graded maximal ideal $\fm$ of $A$.

\item
$A$ is weakly normal if and only if the localization $A_{\fm}$ is weakly normal for every graded maximal ideal $\fm$ of $A$. 
\end{enumerate}
\end{theorem}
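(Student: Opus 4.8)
The plan is to deduce both statements from the general machinery of Proposition~\ref{local-lifting2} by verifying its hypotheses for $\mathcal P = $ seminormal (resp.\ weakly normal), working inside a suitable subcategory $\mathbf D$ of Noetherian rings whose integral closure in the total ring of fractions is $\mathbb Z$-graded (or ungraded, as a degenerate case), and which is closed under the operations listed in the Set~up. The properties $(\mathbf{P1})$ and $(\mathbf{P3})$ for seminormality and weak normality are standard: seminormality and weak normality are local properties (this is part of the equivalent characterizations surveyed in \cite{V11}), giving $(\mathbf{P1})$; and $(\mathbf{P3})$ — descent along a faithfully flat extension $B \to A$ — follows because the defining equations in conditions (i) and (ii) of the Definition involve only finitely many elements, and faithful flatness lets one solve for $x$ in $B$ once it is solvable in $A$ (one checks the ideal $(x) \subseteq A \otimes_B \kappa$ argument, or invokes the known faithfully flat descent for these properties). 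So the content is reduced to checking $(\mathbf{Lift})$ and $(\mathbf{MR})$ for these two properties within $\mathbf D$.

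However, applying Proposition~\ref{local-lifting2} directly would require $(\mathbf{MR})$ as an input, which is precisely the theorem we are trying to prove — so the logic must be arranged more carefully, and the honest route is to prove $(\mathbf{MR})$ here \emph{directly} using the graded structure and the hypothesis on the integral closure. First I would reduce to showing that, assuming $A_{\fm}$ is seminormal (resp.\ weakly normal) for every graded maximal ideal $\fm$, the ring $A$ is seminormal (resp.\ weakly normal); the converse is immediate from $(\mathbf{P1})$. Let $\overline A$ be the integral closure of $A$ in its total quotient ring; by hypothesis $\overline A$ is $\mathbb Z$-graded, and the conductor $\mathfrak c = (A :_A \overline A)$ is then a homogeneous ideal of $A$. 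Seminormality of $A$ is equivalent to the statement that $A = {}^+_{\overline A}A$, i.e.\ that $A$ is closed under the gluing/subintegral operation inside $\overline A$; concretely, for $b \in \overline A$ with $b^2, b^3 \in A$ one needs $b \in A$. Given such a $b$, write its homogeneous decomposition $b = \sum b_i$ in $\overline A$; the equations $b^2 \in A$, $b^3 \in A$ are homogeneous, so they hold degree by degree, and one shows each partial sum lies in $A$ by an induction on the number of components, reducing to the case where $b$ has a single nonzero homogeneous component together with a unit-degree part — this is the standard trick for promoting local statements on graded rings to global ones (the same device underlying the original Matijevic–Roberts theorem).

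For the genuinely local step: once $b \in \overline A$ has (essentially) one critical homogeneous component, the element $b$ is ``supported'' near a single graded prime, and I would localize at a graded maximal ideal $\fm$ containing the relevant homogeneous conductor-type ideal. Since $b^2, b^3 \in A$ implies $b^2, b^3 \in A_{\fm}$, seminormality of $A_{\fm}$ yields some $x \in A_{\fm}$ with $x^2 = b^2$, $x^3 = b^3$, hence $x = b$ in the total quotient ring (as $b$ is a nonzerodivisor multiple — one argues $x=b$ via $b \cdot b^2 = b^3 = x^3$ and cancellation, or notes $\overline A$ is reduced), so $b \in A_{\fm}$ for all graded maximal $\fm$; a homogeneous element of $\overline A$ lying in $A_{\fm}$ for all graded maximal $\fm$ lies in $A$, because the ideal $\{a \in A : ab \in A\}$ is homogeneous and not contained in any graded maximal ideal, hence equals $A$. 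The weak normality statement (2) runs in parallel, replacing the seminormality equations by the weak-subintegral conditions (i) and (ii) simultaneously and using that the prime-$p$ equations in (ii) are again homogeneous in the graded setting, so the same component-by-component induction and graded-localization argument applies; here one additionally uses that the nonzerodivisor $d$ appearing in (ii) may be taken homogeneous after clearing denominators. The main obstacle I anticipate is the bookkeeping in the inductive reduction to a single homogeneous component — one must be careful that subtracting off an already-verified component of $b$ preserves the hypotheses $b'^2, b'^3 \in A$ (it does, since $A$ is a graded subring and the verified component is in $A$), and that the ``graded maximal ideal'' localizations see every homogeneous element, which is exactly where the hypothesis that $\overline A$ is $\mathbb Z$-graded is essential: it guarantees the obstruction ideal is homogeneous so that testing against graded maximal ideals suffices.
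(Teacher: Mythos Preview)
Your overall architecture --- reduce to homogeneous $b\in\overline A$ with $b^2,b^3\in A$, then test membership in $A$ via the graded maximal ideals using a homogeneous transporter ideal --- is sound, and the graded-local step you outline is correct. But the reduction from arbitrary $b$ to homogeneous $b$ has a real gap. You claim that once the extremal component $b_m$ is shown to lie in $A$, the residual $b'=b-b_m$ again satisfies $(b')^2,(b')^3\in A$ ``since $A$ is a graded subring and the verified component is in $A$''. This is false: $(b-b_m)^2=b^2-2b_m b+b_m^2$, and while $b^2,b_m^2\in A$, the cross term $b_m b$ has no reason to lie in $A$ (we only know $b_m\in A$ and $b\in\overline A$). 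The same obstruction hits $(b')^3$. So the induction on the number of homogeneous components does not go through as written, and this is precisely the nontrivial content of the Anderson and Leahy--Vitulli results \cite{And81,LV85} (and \cite{VL99} for weak normality) that the paper invokes as a black box.

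The paper's proof differs from yours in two further respects worth noting. First, it establishes reducedness of $A$ at the outset by embedding $A$ into $\prod_{\fm}A_\fm$ over graded maximal ideals, using that associated primes are graded; you should not skip this, since seminormality is only defined for reduced rings. Second, rather than your direct transporter-ideal argument over all graded maximal ideals, the paper passes to a single homogeneous localization: for an arbitrary maximal ideal $\fp$ one takes the graded prime $\fp^*$, finds a graded maximal $\fm\supseteq\fp^*$, and uses the chain of localizations $A_{(\fm)}\to A_{(\fp^*)}=A_{(\fp)}\to A_\fp$ to reduce to the case where $A$ has a \emph{unique} graded maximal ideal. In that setting a homogeneous element outside $\fm$ is automatically a unit, which makes the passage from $x\in A_\fm$ back to $x\in A$ immediate. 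Your transporter-ideal route would also work for the homogeneous case, but you still need the cited graded-seminormalization theorems to bridge from ``homogeneous elements are fine'' to ``$A$ is seminormal''.
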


\begin{proof}
Before starting the proof of each assertion, we prove that, under the seminormal or weak normal condition on each localized ring $A_\fm$, that $A$ is a reduced ring. Recall that seminormal rings and weakly normal rings are always reduced. Let us consider the canonical diagonal mapping
$$
f:A \to \prod_{\fm} A_\fm,
$$
where the product ranges over all graded maximal ideals. Since $A_\fm$ is reduced under the stated assumptions, it suffices to show that $f$ is injective. Assume that $x \in A$ is a nonzero element in the kernel of $f$. Let $N:=Ax \subseteq A$ and consider an associated prime $\fp$ of $N$. It is known that every associated prime ideal of a graded ring is graded by \cite[Lemma 1.5.6]{BrHer93}, which shows that $\fp \subseteq \fm$ for some graded maximal ideal $\fm \subseteq A$, which we fix now. Then $0 \ne (Ax)_\fp \subseteq A_\fp$. Since $f(Ax)=0$, it follows that $(Ax)_\fm=0$. But as we have a factorization $Ax \subseteq A \to A_\fm \to A_\fp$, it follows that $(Ax)_\fp=0$, which gives a contradiction. So we proved that $f$ is injective and thus, $A$ is reduced. In particular, the total ring of fractions of $A$ is a finite product of fields. So the integral closure of $A$ in the total ring of fractions of $A$ is a (possibly non-Noetherian) normal $\mathbb{Z}$-graded ring by hypothesis. We denote this integral closure by $\overline{A}$.

$(1)$: Since seminormality is a local property in view of \cite[Proposition 3.7]{S80}, the ``only if" part is clear. It suffices to prove the ``if" part. So assume that $A_\fm$ is seminormal for every graded maximal ideal $\fm$. Again by \cite[Proposition 3.7]{S80}, it suffices to show that $A_\fp$ is seminormal for every maximal ideal $\fp \subseteq A$. Let $\fp^*$ be the ideal generated by all homogeneous elements contained in $\fp$. Then it follows from \cite[Lemma 1.5.6]{BrHer93} that $\fp^*$ is a prime ideal. Let $A_{(\fp)}$ and $A_{(\fp^*)}$ be the homogeneous localizations as in \cite[p.31]{BrHer93}. Then by the construction, we have $A_{(\fp^*)}=A_{(\fp)}$. Since $\fp^*$ is graded, there is a graded maximal ideal $\fm$ such that $\fp^* \subseteq \fm$. Then we have a localization map $A_{(\fm)} \to A_{(\fp^*)}=A_{(\fp)}$, which extends to
$$
A_{(\fm)} \to A_{(\fp^*)}=A_{(\fp)} \to A_\fp,
$$
where the second map is also a localization. So to prove that $A_\fp$ is seminormal, it suffices to prove that the same property holds on $A_{(\fm)}$. Without loss of generality, we may replace $A$ by $A_{(\fm)}$ (resp. $\overline{A}$ by $\overline{A}_{(\fm)}:=\overline{A} \otimes_A A_{(\fm)}$) to assume that $A$ is a $\mathbb{Z}$-graded Noetherian ring with a unique graded maximal ideal $\fm$ such that $A_\fm$ is seminormal, and $\overline{A}$, which is the integral closure of $A$, is a normal $\mathbb{Z}$-graded ring. We have a push-out diagram:
$$
\begin{CD}
A @>>> \overline{A} \\
@VVV @VVV \\
A_\fm @>>> \overline{A}_\fm=\overline{A} \otimes_A A_\fm \\
\end{CD}
$$
Suppose that $x \in \overline{A}$ is a homogeneous element that satisfies $x^2,x^3 \in A$. Since $\overline{A}_\fm$ coincides with the integral closure of $A_\fm$ in its total ring of fractions, it follows from the hypothesis that $\frac{x}{1} \in A_\fm$. Since $x$ is a homogeneous element, we can write
$$
\frac{x}{1}=\frac{a}{b}~\mbox{for homogeneous elements}~a \in A~\mbox{and}~b \in A \setminus \fm.
$$
However, as the only graded maximal ideal of $A$ is $\fm$ and $b$ is a homogeneous element not contained in $\fm$, we find that $b$ is a unit element of $A$. That is, we get $x \in A$. By applying \cite[Proposition 2.4]{LV85}  (see also \cite[Theorem 2]{And81}), $A$ is seminormal, as desired.

$(2)$: The weak normality is a local property by \cite[Corollary 2]{Y85}. So as in the first case, we may assume that $A$ is a $\mathbb{Z}$-graded ring with a unique maximal ideal $\fm$ and $A_\fm$ is weakly normal. Keep the notation as in $(1)$. In view of \cite[Proposition 3.1]{VL99}, it suffices to show that if a homogeneous element $x \in \overline{A}$ satisfies $x^p \in A$ and $px \in A$ for some prime $p$, then $x \in A$. By the weak normality of $A_\fm$, it follows that $\frac{x}{1} \in A_\fm$ and one can proceed as in the first case to finish the proof.
\end{proof}

We recall the following result.

\begin{theorem}[Heitmann, Murayama]
\label{LifNormality}
Let $(A,\fm)$ be a Noetherian local ring with a nonzero divisor $y \in \fm$. If $A/yA$ is seminormal (resp. weakly normal), then $A$ is also seminormal (resp. weakly normal).
\end{theorem}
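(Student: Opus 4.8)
The plan is to separate an elementary reduction from the genuine content, the latter being exactly what is due to Heitmann (for seminormality) and Murayama (for weak normality).

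\emph{Reduction to the reduced case.} I would first observe that $A$ itself is reduced. Let $N$ denote the nilradical of $A$. Every element of $N$ maps to a nilpotent, hence to $0$, in the reduced ring $A/yA$, so $N\subseteq yA$; and if $a=yb\in N$ then $y^{n}b^{n}=0$ for some $n$, so $b^{n}=0$ and $b\in N$. Hence $N=yN$, and $N=0$ by Nakayama's lemma (here we use $y\in\fm$ and that $N$ is finitely generated). In particular the total ring of fractions $Q(A)$ is a finite product of fields and $y$ is a unit in $Q(A)$.

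\emph{Reformulating the target.} Next I would use the standard criterion (among the equivalent definitions collected in \cite{V11}, and checked in each field factor of $Q(R)$): a reduced Noetherian ring $R$ is seminormal if and only if every $x\in Q(R)$ with $x^{2},x^{3}\in R$ lies in $R$, and weakly normal if and only if, in addition, every $x\in Q(R)$ with $x^{p}\in R$ and $px\in R$ for some prime $p$ lies in $R$. So, given such an $x\in Q(A)$, one must prove $x\in A$ while knowing that the corresponding conclusion is available over $A/yA$. The natural move is to pass to the subring $A[x]=A+Ax\subseteq Q(A)$ (the equality holding because $x^{2}\in A$), on which $y$ remains a nonzero divisor, and to compare $A/yA$ with $A[x]/yA[x]$, the aim being that seminormality (resp.\ weak normality) of $A/yA$ forces $A[x]=A$. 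For the weakly normal statement one may first dispose of the seminormal part using the seminormal case and then verify only condition (ii) of the Definition.

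\emph{The main obstacle.} The difficulty is precisely this comparison. Because no excellence hypothesis is imposed, the normalization and seminormalization of $A$ need not be module-finite over $A$, the map $A/yA\to A[x]/yA[x]$ need not be injective, and one cannot naively ``reduce $x$ modulo $y$''; extracting from $x$ an honest violation of seminormality (resp.\ weak normality) inside $A/yA$, and hence a contradiction, is the technical heart, which I would carry out following Heitmann and Murayama and do not see how to circumvent. In the framework introduced above the statement is the instance $I=yA$ of $(\bf{Gr})$: since $G(yA)\cong (A/yA)[X]$ by \cite[Theorem 16.2]{M86} and since seminormality and weak normality satisfy $(\bf{P2})$ (see \cite{LV85} and \cite{Y85}), Proposition \ref{local-lifting} makes $(\bf{Lift})$ for these properties equivalent to $(\bf{Gr})$ for principal ideals generated by nonzero divisors, so passing to the associated graded ring merely relocates the difficulty. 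Finally, once Theorem \ref{LifNormality} is available it combines with Theorem \ref{Matijevic-Roberts1} through Proposition \ref{local-lifting2} to yield the ascent statement Theorem \ref{grlocal1}.
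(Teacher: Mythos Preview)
The paper does not supply its own proof of this statement; it merely records it as a known result, attributing the seminormal case to Heitmann \cite{He08}, the weakly normal case to Murayama \cite{Mu22}, and noting a further approach in \cite{BEGMNW22}. Your proposal is consistent with this: your preliminary reduction (that $A$ is reduced, via $N=yN$ and Nakayama) and your reformulation of the target are correct, and you then explicitly and honestly defer the substantive step to the original sources, just as the paper does. There is therefore no argument in the paper to compare yours against beyond the shared citation.
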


While the seminormal case due to Heitmann is found in \cite{He08}, the weakly normal case due to Murayama is found in \cite[Proposition 4.10]{Mu22}, where a new proof of the seminormal case is also given. Recently, even another innovative approach is discovered in \cite{BEGMNW22}. The following result gives a realization of Proposition \ref{local-lifting2}. The corresponding statement in the normal case has been already known. See for instance \cite[Theorem 4.5.9]{BrHer93}.

\begin{theorem}
\label{grlocal1}
Let $(A,\fm)$ be a Noetherian local ring of $\dim A \geq 1$ and let $I$ be an ideal of $A$. Then the following assertions hold.
\begin{enumerate}
\item
If $G(I)$ is seminormal, then $\mathscr R_+(I)$, $\mathscr R(I)$ and $A$ are seminormal.

\item
If $G(I)$ is weakly normal, then $\mathscr R_+(I)$, $\mathscr R(I)$ and $A$ are weakly normal.
\end{enumerate}
\end{theorem}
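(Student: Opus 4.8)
The plan is to obtain all three conclusions, for both parts, by running the argument of Proposition~\ref{local-lifting2} on the extended Rees ring, the inputs being Theorem~\ref{LifNormality}, the fact (recorded inside the proof of Theorem~\ref{Matijevic-Roberts1}) that seminormality and weak normality are local properties, and Theorem~\ref{Matijevic-Roberts1} itself applied to $\mathscr R(I)$. Concretely: since $G(I)\cong\mathscr R(I)/t^{-1}\mathscr R(I)$ and $t^{-1}$ is a nonzerodivisor of $\mathscr R(I)$ contained in the unique graded maximal ideal $\mathfrak M$, if $G(I)$ is seminormal (resp.\ weakly normal) then so is its localization $\mathscr R(I)_{\mathfrak M}/t^{-1}\mathscr R(I)_{\mathfrak M}$, hence $\mathscr R(I)_{\mathfrak M}$ is seminormal (resp.\ weakly normal) by Theorem~\ref{LifNormality}; because $\mathfrak M$ is the only graded maximal ideal, a Matijevic--Roberts type statement for $\mathscr R(I)$ upgrades this to $\mathscr R(I)$ itself. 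The reason Proposition~\ref{local-lifting2} cannot simply be invoked is that the Matijevic--Roberts statement of Theorem~\ref{Matijevic-Roberts1} requires the integral closure in the total ring of fractions to be $\mathbb Z$-graded, so the crux is to verify this hypothesis for $\mathscr R(I)$.

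To set up, I would first observe that $G(I)$ seminormal is reduced, whence $A$ is reduced (as noted in the introduction, or directly: $\bigcap_n I^n=0$ by the Krull intersection theorem, so a nonzero nilpotent of $A$ would yield a nonzero nilpotent homogeneous element of $G(I)$); therefore $A[t]$, $\mathscr R(I)$ and $\mathscr R_+(I)$ are all reduced. Next, to check the hypothesis of Theorem~\ref{Matijevic-Roberts1} for $\mathscr R(I)$, I would use that every minimal prime of the $\mathbb Z$-graded ring $\mathscr R(I)$ is graded by \cite[Lemma~1.5.6]{BrHer93}; dividing by such a minimal prime gives a $\mathbb Z$-graded domain whose integral closure in its field of fractions is again $\mathbb Z$-graded by the classical graded normalization theorem (see, e.g., \cite[Section~2.3]{SwHu06}). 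As $\mathscr R(I)$ is reduced, its integral closure in its total ring of fractions is the product of these finitely many normalizations, hence $\mathbb Z$-graded, as needed. (One may equivalently identify this integral closure with $\bigoplus_{n\in\mathbb Z}\overline{I^n\overline A}\,t^n$, which is manifestly graded.) Granting this, the part of the proof of Proposition~\ref{local-lifting2} recalled above shows that $\mathscr R(I)$ is seminormal (resp.\ weakly normal).

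It then remains to descend from $\mathscr R(I)$ to $\mathscr R_+(I)$ and to $A$. I would do this by a degree argument inside the reduced ring $A[t,t^{-1}]$, using that $A=\mathscr R(I)_0\subseteq\mathscr R_+(I)=\bigoplus_{n\ge 0}I^n t^n\subseteq\mathscr R(I)$. Given $y,z\in\mathscr R_+(I)$ with $y^3=z^2$, seminormality of $\mathscr R(I)$ produces $x\in\mathscr R(I)$ with $x^2=y$ and $x^3=z$; if $x$ had a nonzero component in some degree $a<0$, then $x^2$ would have a nonzero component in degree $2a<0$ (its coefficient being the square of the nonzero leading coefficient of $x$ in degree $a$, nonzero because $A[t,t^{-1}]$ is reduced), contradicting $y\in\mathscr R_+(I)$; hence $x\in\mathscr R_+(I)$, and $\mathscr R_+(I)$ is seminormal. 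If moreover $y,z\in A$, the same reasoning applied to the top degree gives $x\in A$, so $A$ is seminormal. The weak normality condition is treated identically, the only extra points being that the nonzerodivisor $d$ occurring in it remains a nonzerodivisor in $\mathscr R(I)$ — which holds because the minimal primes of $A$, $\mathscr R_+(I)$ and $\mathscr R(I)$ all arise compatibly from the minimal primes of $A$ — and that $u^p=0$ forces $u=0$ in the reduced ring $A[t,t^{-1}]$. Part~(2) is verbatim part~(1), with Heitmann's theorem replaced by Murayama's (both in Theorem~\ref{LifNormality}) and ``seminormal'' by ``weakly normal''.

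The step I expect to be the real obstacle is the graded-integral-closure verification for $\mathscr R(I)$ in the second paragraph: since $\overline A$ may fail to be Noetherian, the careful route is the reduction to $\mathbb Z$-graded domains via graded minimal primes rather than a direct formula; everything else is either formal or a routine computation of the minimal primes of Rees and extended Rees rings.
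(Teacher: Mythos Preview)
Your argument for $\mathscr R(I)$ is essentially the paper's: lift along the nonzerodivisor $t^{-1}$ at the unique graded maximal ideal via Theorem~\ref{LifNormality}, then globalize with Theorem~\ref{Matijevic-Roberts1}. For the graded-integral-closure hypothesis you reduce to $\mathbb Z$-graded domains through graded minimal primes, while the paper simply cites \cite[Proposition~5.2.4]{SwHu06}; these are equivalent verifications. The genuine divergence is in the descent to $A$ and to $\mathscr R_+(I)$. The paper obtains $A$ from $\mathscr R(I)$ by localizing to $A[t,t^{-1}]$ and invoking faithfully flat descent (\cite[Theorem~2.22]{V11}), and then gets $\mathscr R_+(I)$ by a structural route: reducedness of $G(I)$ forces $I$ to be a normal ideal (\cite[Exercise~5.7]{SwHu06}), so $\mathscr R_+(I)$ is integrally closed in the seminormal (resp.\ weakly normal) ring $A[t]$ by \cite[Proposition~5.2.1]{SwHu06}, whence it inherits the property. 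Your approach replaces both steps by an elementary leading/trailing-degree computation inside the reduced ring $A[t,t^{-1}]$, using only that $x_a\ne 0$ forces $x_a^2\ne 0$ (resp.\ $x_a^p\ne 0$). This is correct and more self-contained, and your remark that the minimal primes of $\mathscr R_+(I)$ and $\mathscr R(I)$ are exactly the contractions of the $\mathfrak p_i[t^{\pm 1}]$ for $\mathfrak p_i\in\Min(A)$ does settle the nonzerodivisor issue in the weak-normality clause. The trade-off is that the paper's route yields, as a byproduct, the stronger statement that $I$ is a normal ideal and that $\mathscr R_+(I)$ is integrally closed in $A[t]$, facts your degree argument does not see; conversely, your argument avoids citing the descent theorem and the normal-ideal exercise, at the cost of a small case analysis.
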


\begin{proof}
$(1)$: First, we prove that the extended Rees ring $\mathscr R(I)$ is seminormal. Recall that $G(I) \cong \mathscr R(I)/t^{-1} \mathscr R(I)$ and $t^{-1}$ is a nonzero divisor of $\mathscr R(I)$. Since $A$ is local, $\mathscr R(I)$ has a unique graded maximal ideal $\fM:=\fm \mathscr R(I)+It\mathscr R(I)+t^{-1}\mathscr R(I)$ which contains $t^{-1}$. By Theorem \ref{LifNormality}, the localization $\mathscr R(I)_{\fM}$ is seminormal. Since $G(I)$ is reduced, it follows that the ring $\mathscr R(I)$ is also reduced by \cite[Exercise 5.9 at page 116]{SwHu06}. Moreover, the integral closure of $\mathscr R(I)$ in its total ring of fractions is a $\mathbb{Z}$-graded ring in view of \cite[Proposition 5.2.4]{SwHu06}. So Theorem \ref{Matijevic-Roberts1} yields seminormality of $\mathscr R(I)$. Then the localization $\mathscr R(I)[t]=A[t,t^{-1}]$ is seminormal. Since $A \to A[t,t^{-1}]$ is faithfully flat, it follows that $A$ is seminormal by \cite[Theorem 2.22]{V11}. The reducedness of $G(I)$ gives that $I$ is a normal ideal, that is $I^n$ is integrally closed for all $n>0$ in view of \cite[Exercise 5.7 at page 116]{SwHu06}. So $\mathscr R_+(I)$ is integrally closed in the seminormal ring $A[t]$ by \cite[Proposition 5.2.1]{SwHu06}. Hence $\mathscr R_+(I)$ is seminormal as well.

$(2)$: One can proceed as in the first case by applying Theorem \ref{Matijevic-Roberts1}. So it suffices to recall that $(\bf{Lift})$ holds for $\mathcal{P}=\rm{weak~normality}$ by Theorem \ref{LifNormality}.
\end{proof}

\begin{remark}
We applied Proposition \ref{local-lifting2} to prove Theorem \ref{grlocal1}. However, we do not have to worry about specifying what the subcategory $\mathbf{D} \subseteq \mathbf{C}$ is, because the ideas appearing in the proof of Proposition \ref{local-lifting2} are mostly essential.
\end{remark}

\begin{acknowledgement}
Both authors would like to thank the anonymous referee for pointing out errors in the first draft.
\end{acknowledgement}

\end{document}